\newtheorem{lemma}{Lemma}[section]
\numberwithin{equation}{section}
\newtheorem{remark}{Remark}[section]
\begin{document}

\title { Preliminary Test and ‎‎Shrinkage Estimations of Scale Parameters for Two Exponential Distributions based on Record Values}

\author{H. Zakerzadeh\thanks{Corresponding: hzaker@yazd.ac.ir} , A. A. Jafari, M. Karimi  \\
{\small Department of Statistics, Yazd University, Yazd, Iran}}

\date{}
\maketitle

\begin{abstract}
The exponential distribution is applied in a very wide variety of statistical procedures. Among the most prominent applications are those in the field of life testing and reliability theory. When there are two record samples available for estimating the scale parameter, a preliminary test is usually used to determine whether to pool the samples or use the individual sample.
In this paper, the preliminary test estimator and shrinkage estimator are studied. The optimum level of significance for preliminary test estimation and the optimum values of shrinkage coefficient are obtained based on minimax regret criterion under the weighted square error loss function.
\end{abstract}

\noindent {\bf Keywords:} Mean square error, Minimax regret criterion, Optimal significance level, Preliminary test estimation, Record values, Shrinkage estimator.

‎\section{Introduction}‎
Record values are of interest and important in many real life applications involving data relating to meteorology, sport, economics and life testing. Sometimes, the experimenter has some knowledge about the parameter of interest, either from past experience, or from experience with similar situations. This prior information may be incorporated in the estimation process using a preliminary test estimator
\citep{oh-to-78,to-wa-75,sa-hi-73,chiou-90}
and improved the estimation process. Preliminary test and shrinkage estimation in the one and two-parameter
exponential distributions are considered by many authors including
\cite{pandey-83-shr}
and
\cite{ch-ha-89}.

Let $X_1,X_2,.....$ be a sequence of independent and identically distributed random variables having the same distribution. An observation $X_j$ will be called an upper record value if exceeds in value all of the preceding observations, i.e., if $X_j>X_i$, for every $i<j$. The sequence of record times is defined as follows: $T_1=1$ with probability 1 and, $T_n=\min\{ j : X_j > X_{T_{n-1}}‎\}$ for $n\geq2‎$. A sequence of upper record values is defined by $X_{U(n)}=X_{T_n}$‎,  $n=1,2,\dots$‎.
For details on record values and other interesting topics related to records see
\cite{ahsanullah-95} and \cite{ar-ba-na-98}.

In this paper the estimation of the scale parameter in two exponential distributions based on record values is studied. When two record samples from exponential distributions are available, the question of whether to pool or not to pool these two record samples is often determined via a preliminary test. If the test is not statistically significant, the pooled estimator is used; otherwise, the likelihood estimator is used. The optimum level of statistical significance for the usual preliminary test estimator are obtained in Section \ref{sec.prt} by using the minimax regret criterion.
A shrinkage estimator is also considered. 
The optimum values of shrinkage coefficients for the shrinkage estimator are obtained in Section \ref{sec.shr}.‎
The proposed estimators are illustrated  using an example, .and are compared using simulation in Section \ref{sec.num}.

\section{Preliminary test estimation}
\label{sec.prt}

Let $X_1,X_2,\dots$ and $Y_1,Y_2,\dots$ be  sequences of independent and identically distributed random variables from two exponential models with the following probability density function:
\begin{eqnarray*}
 f_X(x)=\frac{1}{\theta_1} \exp ( - \frac{x}{\theta_1} ) \ \ \ \ x > 0,    \ \ \ \
 f_Y(y)=\frac{1}{\theta_2} \exp ( - \frac{y}{\theta_2} ) \ \ \ \ y > 0.
 \end{eqnarray*}
Also,  suppose that we observe $n_1$ and $n_2$ upper record values from these two sequences as $\boldsymbol X=(X_{U(1)}, \dots, X_{U(n_1)})$ and
 $\boldsymbol Y=(Y_{U(1)}, \dots, Y_{U(n_2)})$. Therefore,
the maximum likelihood estimations (MLE) of ‎$\theta_1$‎ and ‎$\theta_2$ are
 ‎‎‎‎\begin{eqnarray*}
 \hat{‎\theta‎}_1=‎\frac{1}{n_1}‎X_{U(n_1)}, \ \  \hat{‎\theta‎}_2‎=‎\frac{1}{n_2}‎Y_{U(n_2)},
  \end{eqnarray*}
 and $‎\frac{2‎n_i ‎\hat{‎\theta‎}_i‎}{‎\theta_i‎}‎$ has the chi-square distribution with $2n_i$, $i=1,2$ degrees of freedom.

 Assume that  a prior information about the scale parameters is  $\theta_1=‎\theta_2‎$. Then, the pooled estimator for $‎\theta_1‎$  (and $‎\theta_2‎$)  is
 \begin{equation}\label{thetap}
 ‎\hat{‎\theta‎}_p‎=‎\frac{n_1 ‎\hat{‎\theta‎}_1+‎n_2 ‎\hat{‎\theta‎}_2}{n_1+n_2}‎.
 \end{equation}
  To incorporate the prior information into the statistical procedure, a null hypothesis regarding the information is usually formulated and tested
\cite[see][]{bancroft-44,ba-ha-77}.
Now consider testing $H_0:‎\theta_1=‎\theta_2$ against ‎‎$H_1:‎\theta_1‎ \neq‎ ‎\theta_2$. If the null hypothesis is not rejected we use the pooled samples for estimating the scale parameter, but if the null hypothesis is rejected then we use the individual sample for estimating the parameter.

The likelihood ratio test reject $H_0$ when
$‎‎\frac{‎\hat{‎\theta‎}_1‎}{‎\hat{‎\theta‎}_2‎}‎<c_1$ or $\frac{‎\hat{‎\theta‎}_1‎}{‎\hat{‎\theta‎}_2‎}>c_2$
 where
$c_1=F_{(2n_1,2n_2),\alpha/2}$, $c_2=F_{(2n_1,2n_2),1-\alpha/2}$.
Therefore,
a preliminary test estimator for $‎\theta‎_1$ may be obtained as follows
\begin{equation}‎\label{eq.thetapt}‎
\hat{\theta}_{pt}=\left\{
\begin{array}{lc}
\hat{\theta}_p & \ \ c_1< \frac{\hat{\theta}‎_1}{\hat{\theta}‎_2}<c_2\\
\hat{\theta}_1 & \ \  \ \text{otherwise}.
\end{array} \right.
\end{equation}
 Similarly, a preliminary test estimator for $‎\theta‎_2$ can be obtained as
\begin{equation}
\hat{\theta}_{pt}^*=\left\{
\begin{array}{lc}
\hat{\theta}_p & \ \ c_1< \frac{\hat{\theta}‎_1}{\hat{\theta}‎_2}<c_2\\
\hat{\theta}_2 & \ \  \ \text{otherwise}.
\end{array} \right.
\end{equation}
In the rest of this paper, we obtain the properties of preliminary test estimator $\hat{\theta}_{pt}$ and  can  use them for $\hat{\theta}_{pt}^*$.

The preliminary test estimator always depends on  the significance level ($\alpha$) of the preliminary test. The methods to seek the optimal level of significance for the preliminary test have been investigated by
\cite{sa-hi-73},
\cite{to-wa-75} and
\cite{oh-to-78}.
\cite{hirano-77}
applied AIC
\citep{akaike-98}
to determine the optimal level of significance for the preliminary test.

\begin{lemma}
For the preliminary test estimator of $‎\theta‎_1$ in \eqref{eq.thetapt},  we have

\noindent i)
\begin{eqnarray*}
Bias_{\theta_1}(\hat\theta_{pt})=E(\hat\theta_{pt})-\theta‎_1&=‎& - \lambda \left( \theta_1 [I_{d_2}(n_1+1,n_2)-I_{d_1}(n_1+1,n_2)] \right)\\
&&+ \lambda \left( \theta_2 [I_{d_2}(n_1,n_2+1)-I_{d_1}(n_1,n_2+1)] \right),
\end{eqnarray*}

\noindent ii)
\begin{eqnarray*}
MSE(\hat{\theta}_{pt})
&=&\frac{n_1+1}{n_1}\theta_1^2+(\lambda^2-2\lambda)\theta_1^2\frac{(n_1+1)}{n_1} [I_{d_2}(n_1+2,n_2)-I_{d_1}(n_1+2,n_2)]\\
&&+\lambda^2 \theta_2^2\frac{(n_2+1)}{n_2} [I_{d_2}(n_1,n_2+2)-I_{d_1}(n_1,n_2+2)]\\
&&+2(\lambda-\lambda^2)\theta_1 \theta_2 [I_{d_2}(n_1+1,n_2+1)-I_{d_1}(n_1+1,n_2+1)]-2\theta_1^2\\
&&-2 \lambda \theta_1 \theta_2 [I_{d_2}(n_1,n_2+1)-I_{d_1}(n_1,n_2+1)]\\
&&+2 \lambda \theta_1^2 [I_{d_2}(n_1+1,n_2)-I_{d_1}(n_1+1,n_2)]+\theta_1^2.
\end{eqnarray*}
where $\lambda={n_2}/{(n_1+n_2)}$,  $I_{d}(a,b)=\int_0^{d}  x^{a-1}(1-x)^{b-1}dx$ is incomplete beta function,  $d_1=1-\frac{n_2}{c_1n_1\delta}$ and $d_2=1-\frac{n_2}{c_2n_1\delta}$, and $\delta=\frac{\theta_2}{\theta_1}$.
\end{lemma}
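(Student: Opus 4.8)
The plan is to reduce every quantity to a partial moment of a single Beta random variable. First I would write the estimator in additive form,
$\hat\theta_{pt} = \hat\theta_1 + \lambda(\hat\theta_2 - \hat\theta_1)\mathbf{1}_A$, where $\lambda = n_2/(n_1+n_2)$, $A = \{c_1 < \hat\theta_1/\hat\theta_2 < c_2\}$ and $\mathbf{1}_A$ is its indicator; this follows from \eqref{thetap} because $\hat\theta_p - \hat\theta_1 = \lambda(\hat\theta_2 - \hat\theta_1)$. Since $2n_i\hat\theta_i/\theta_i \sim \chi^2_{2n_i}$, the variables $U_i := n_i\hat\theta_i/\theta_i$ are independent with $U_i \sim \mathrm{Gamma}(n_i,1)$. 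I would then pass to the pivotal pair $B := U_1/(U_1+U_2) \sim \mathrm{Beta}(n_1,n_2)$ and $S := U_1+U_2 \sim \mathrm{Gamma}(n_1+n_2,1)$, which are independent, so that $\hat\theta_1 = \theta_1 B S/n_1$ and $\hat\theta_2 = \theta_2(1-B)S/n_2$. Because $\hat\theta_1/\hat\theta_2 = \frac{n_2}{n_1\delta}\,\frac{B}{1-B}$ is strictly increasing in $B$, solving $\hat\theta_1/\hat\theta_2 = c_i$ converts $A$ into the interval $\{d_1 < B < d_2\}$, with $d_1,d_2$ as in the statement.

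For part (i), using $E(\hat\theta_1) = \theta_1$ gives $\mathrm{Bias} = \lambda\big(E[\hat\theta_2\mathbf{1}_A] - E[\hat\theta_1\mathbf{1}_A]\big)$. Each partial expectation factorizes over the independent pair $(B,S)$: for example $E[\hat\theta_1\mathbf{1}_A] = \frac{\theta_1}{n_1}E[B\,\mathbf{1}_A]\,E[S]$ and $E[\hat\theta_2\mathbf{1}_A] = \frac{\theta_2}{n_2}E[(1-B)\mathbf{1}_A]\,E[S]$, with $E[S] = n_1+n_2$. Writing the truncated Beta moments as differences of incomplete beta functions, $E[B\,\mathbf{1}_A] \propto I_{d_2}(n_1+1,n_2) - I_{d_1}(n_1+1,n_2)$ and $E[(1-B)\mathbf{1}_A] \propto I_{d_2}(n_1,n_2+1) - I_{d_1}(n_1,n_2+1)$, and absorbing the normalizing constant through $B(a+1,b) = \frac{a}{a+b}B(a,b)$, the prefactors collapse and (i) follows.

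Part (ii) is the same machinery applied to $(\hat\theta_{pt}-\theta_1)^2 = (\hat\theta_1-\theta_1)^2 + 2\lambda(\hat\theta_1-\theta_1)(\hat\theta_2-\hat\theta_1)\mathbf{1}_A + \lambda^2(\hat\theta_2-\hat\theta_1)^2\mathbf{1}_A$. The first term contributes $\mathrm{Var}(\hat\theta_1) = \theta_1^2/n_1$, which I would keep in the expanded form $\frac{n_1+1}{n_1}\theta_1^2 - 2\theta_1^2 + \theta_1^2$ to match the statement. Expanding the other two terms produces the five truncated moments $E[\hat\theta_1^2\mathbf{1}_A]$, $E[\hat\theta_2^2\mathbf{1}_A]$, $E[\hat\theta_1\hat\theta_2\mathbf{1}_A]$, $E[\hat\theta_1\mathbf{1}_A]$, $E[\hat\theta_2\mathbf{1}_A]$, each evaluated by the factorization above, now also using $E[S^2] = (n_1+n_2)(n_1+n_2+1)$ and the Beta-moment identities at the shifted parameters $(n_1+2,n_2)$, $(n_1,n_2+2)$, $(n_1+1,n_2+1)$. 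I expect the main obstacle to be purely organizational: tracking the $\delta$-dependence when forming $d_1,d_2$, and then regrouping the many terms so that the Gamma/Beta normalizing constants cancel into the clean factors $\frac{n_i+1}{n_i}$ and the indicator contributions collect into the coefficients $\lambda^2-2\lambda$ and $2(\lambda-\lambda^2)$ displayed in the statement.
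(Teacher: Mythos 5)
Your overall strategy is exactly the paper's: write $\hat\theta_{pt}=\hat\theta_1+\lambda(\hat\theta_2-\hat\theta_1)\mathbf{1}_A$, evaluate the truncated moments $E[\hat\theta_1^j\hat\theta_2^k\mathbf{1}_A]$, and assemble bias and MSE (the paper combines them through $MSE(\hat\theta_{pt})=E(\hat\theta_{pt}^2)-2\theta_1E(\hat\theta_{pt})+\theta_1^2$, while you expand $(\hat\theta_{pt}-\theta_1)^2$ directly; the two are algebraically identical). The paper's proof simply asserts the truncated moments without derivation, whereas your Beta--Gamma factorization ($B=U_1/(U_1+U_2)\sim\mathrm{Beta}(n_1,n_2)$ independent of $S=U_1+U_2\sim\mathrm{Gamma}(n_1+n_2,1)$, with $\mathbf{1}_A$ a function of $B$ alone) is the correct machinery for computing them, and your bookkeeping of the coefficients $\tfrac{n_i+1}{n_i}$, $\lambda^2-2\lambda$ and $2(\lambda-\lambda^2)$ comes out right.

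There is, however, one concrete false step. You claim that solving $\hat\theta_1/\hat\theta_2=c_i$ converts $A$ into $\{d_1<B<d_2\}$ ``with $d_1,d_2$ as in the statement.'' It does not: from $\hat\theta_1/\hat\theta_2=\frac{n_2}{n_1\delta}\cdot\frac{B}{1-B}$ one gets $B=\frac{c_in_1\delta}{n_2+c_in_1\delta}=\bigl(1+\frac{n_2}{c_in_1\delta}\bigr)^{-1}=1-\frac{n_2}{n_2+c_in_1\delta}$, which is not the statement's $d_i=1-\frac{n_2}{c_in_1\delta}$. Indeed the printed $d_i$ can be negative (take $\delta=1$, $n_1=n_2$, $c_1<1$), in which case $I_{d_1}$ is not even well defined, so the lemma's $d_i$ appears to be a typo for the expression your own computation produces; but as written your proof silently asserts an equality of cutoffs that is false rather than flagging and correcting it. Relatedly, your ``absorbing the normalizing constant'' step means your $I_d(a,b)$ is the regularized incomplete beta function, i.e. divided by $B(a,b)$; with the unnormalized integral actually defined in the statement, the prefactors do not collapse and a factor $1/B(a,b)$ survives in every term. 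With these two identifications made explicit --- cutoffs $c_in_1\delta/(n_2+c_in_1\delta)$ and regularized $I$ --- your argument goes through and establishes what the lemma evidently intends.
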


\begin{proof}
Let $A=\left\{ \boldsymbol{X}, \boldsymbol{Y}| \ c_1<\frac{\hat{\theta}‎_1}{‎\hat{\theta}‎_2}<c_2 \right\}$ be the acceptance region.
So
\begin{eqnarray*}
\hat{\theta}_{pt}=\frac{n_1\hat{\theta}_1+n_2\hat{\theta}_2}{n_1+n_2} I_A+\hat{\theta}_1 I_{A^c}
=\hat{\theta}_1 - \lambda \hat{\theta}_1 I_A + \lambda \hat{\theta}_2 I_A,
\end{eqnarray*}
where $A^c$ is the complement of $A$. Then
\begin{eqnarray*}
E(\hat{\theta}_{pt})&=‎&\theta‎_1 - \lambda \left( \theta_1 [I_{d_2}(n_1+1,n_2)-I_{d_1}(n_1+1,n_2)] \right)\\
&&+ \lambda \left( \theta_2 [I_{d_2}(n_1,n_2+1)-I_{d_1}(n_1,n_2+1)] \right),
\end{eqnarray*}
and
\begin{eqnarray*}
E(\hat{\theta}_{pt}^2)&=&‎\frac{n_1+1}{n_1} ‎\theta_1^2 ‎‎+(\lambda^2-2\lambda) \big( \theta_1^2\frac{(n_1+1)}{n_1} [I_{d_2}(n_1+2,n_2)-I_{d_1}(n_1+2,n_2)] \big)\\
&&+\lambda^2 \big( \theta_2^2\frac{(n_2+1)}{n_2} [I_{d_2}(n_1,n_2+2)-I_{d_1}(n_1,n_2+2)] \big)\\
&&+2(\lambda-\lambda^2) \big( \theta_1 \theta_2 [I_{d_2}(n_1+1,n_2+1)-I_{d_1}(n_1+1,n_2+1)] \big).
\end{eqnarray*}
Since
$$
MSE(\hat{\theta}_{pt})=E(\hat{\theta}_{pt}^2)-2\theta_1 E(\hat{\theta}_{pt})+\theta_1^2,
$$
the proof is completed.
\end{proof}

\begin{remark}
Under the  weighted square error loss function $L(d;\theta)‎=‎\frac{(d-\theta)^2‎}{\theta‎^2}$, the  risk function is
\begin{eqnarray*}
R_{\alpha‎‎}(‎\delta‎)&=&Risk(‎\hat{\theta}‎_{pt},\theta_1‎)\\
&=&\delta^2 \big[\lambda^2 \frac{(n_2+1)}{n_2} [I_{d_2}(n_1,n_2+2)-I_{d_1}(n_1,n_2+2)]\big]\\
&&+\delta \big[2(\lambda-\lambda^2) [I_{d_2}(n_1+1,n_2+1)-I_{d_1}(n_1+1,n_2+1)]\\
&&-2\lambda [I_{d_2}(n_1,n_2+1)-I_{d_1}(n_1,n_2+1)]\big]\\
&&+\big[\frac{1}{n_1}+(\lambda^2-2\lambda) \frac{(n_1+1)}{n_1} [I_{d_2}(n_1+2,n_2)-I_{d_1}(n_1+2,n_2)]\\
&&+2 \lambda [I_{d_2}(n_1+1,n_2)-I_{d_1}(n_1+1,n_2)]\big],
\end{eqnarray*}
where $\delta=\frac{\theta_2}{\theta_1}$
\end{remark}

Notice that the risk function depends on $‎\alpha$‎ through $c_1$ and $c_2$. If $‎\delta‎\rightarrow 0$ or $\infty$‎‎‎ then $R_{\alpha‎‎}(‎\delta‎)$  converges to $R_{1}(‎\delta‎)$ which is the risk for ‎$\hat{‎\theta‎}_1$. The general shapes of $R_{\alpha‎‎}(‎\delta‎)$ for fixed values of $n_1$, $n_2$ and some $‎\alpha‎$ are shown in Figure \ref{fig.risk1}.
 Consider $\delta_1$ ‎and $\delta_2$ are intersections of $R_{0‎‎}(‎\delta‎)=({n_1+n_2\delta^2 + n_2^2 \delta^2 +n_2^2-2n_2^2 \delta})/{(n_1+n_2)^2}‎$ and $R_{1‎‎}(‎\delta‎)=‎{1}/{n_1}‎$. Therefore,  an optimal value of $\alpha$‎ is $\alpha=1$ if $\delta\leq\delta_1$‎‎‎ or $\delta\geq\delta_2$; it is $\alpha=0$, ‎otherwise. Since ‎$‎\delta‎$ is unknown, we try
 to find an optimum values  $\alpha=\alpha^{*}$ which gives a reasonable risk for all values of $\delta$‎. ‎‎‎  The regret function is defined as
‎$${Reg}(\delta,\alpha)=R_{\alpha‎‎}(‎\delta‎)-\inf_{\alpha}‎ R_{\alpha‎‎}(‎\delta‎),$$‎‎‎
where $\mathop{\inf}_{\alpha} R_{\alpha‎‎}(‎\delta‎)=R_{0‎‎}(‎\delta‎)$ for $\delta_1<\delta<\delta_2 ‎$, and $\mathop{\inf}_{\alpha} R_{\alpha‎‎}(‎\delta‎)=R_{1‎‎}(‎\delta‎)$,
otherwise.

\begin{figure}[ht]
\centering
\includegraphics[width=10cm,height=7cm]{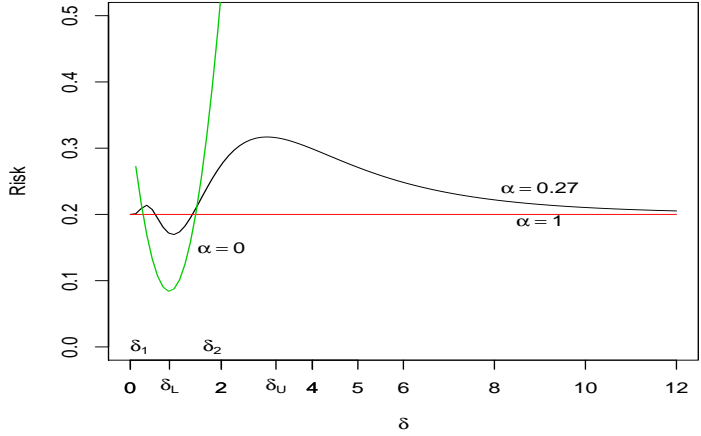}
\vspace{-0.9cm}
\caption{The risk function for some $\alpha$‎ with $n_1=5$ and $n_2=6$.}\label{fig.risk1}
\end{figure}‎

Minimizing the maximum risk sometimes leads to unreasonable or trivial results, especially in prediction problems
\cite[see][]{ho-le-50}.
 In such a situation, we are often able to arrive at reasonable results by minimizing the maximum regret instead of the maximum risk. The minimax regret criterion determines $‎\alpha‎^{*}‎$ such that
$$
\mathop{\sup}_{\delta}\{Reg(\delta,\alpha^{*}‎)\}‎‎ \leq‎ \mathop{\sup}_{\delta}\{Reg(\delta,\alpha)\},$$
for every significance level $\alpha\neq‎\alpha^{*}$.
For $‎\delta‎\leq‎\delta_2$, $Reg(\delta,\alpha)$‎‎‎ takes a maximum value at $‎\delta_L$ based on Figure \ref{fig.risk1}. Also, for $\delta>‎\delta_2$, $Reg(\delta,\alpha)$‎‎‎‎‎‎ ‎‎takes a maximum value at $‎\delta_U$.‎ Thus the minimax regret criterion determines $‎\alpha‎^{*}‎$‎ such that $Reg(\delta_L,\alpha‎^{*}‎)=Reg(\delta_U,\alpha‎^{*}‎)$. 
We found numerically the optimum significance level $‎\alpha‎^{*}‎$ for some $n_1$ and $n_2$. The results are given in Table \ref{tab.1}.

\begin{table}[htp]
\begin{center}
\caption{{\color{red} Optimal value $\alpha^*$ for some $n_1$ and $n_2$.}} \label{tab.1}
‎
\begin{tabular}{|c|cccccc|} \hline
 & \multicolumn{6}{|c|}{$n_1$} \\ \hline
 $n_{{\rm 2}}$ & 2 &  3 &  4 &  5 & 7 &  10 \\ \hline
 2 &  0.38  &  0.30 & 0.27  & 0.24 & 0.22 & 0.20 \\
 3 &  0.42  &  0.34 & 0.29  & 0.27 & 0.24 & 0.21 \\
 4 &  0.44  &  0.36 & 0.31  & 0.28 & 0.25 & 0.22 \\
 5 &  0.46  &  0.38 & 0.33  & 0.30 & 0.26 & 0.23 \\
 7 &  0.49  &  0.40 & 0.35  & 0.32 & 0.28 & 0.25 \\
10 &  0.51  &  0.42 & 0.37  & 0.33 & 0.29 & 0.26 \\ \hline
\end{tabular}

\end{center}
\end{table}

\section{Shrinkage estimator}
\label{sec.shr}

The shrinkage estimators have been discussed by a number of others, for details see
\cite{le-ca-98},
\cite{pr-si-06,pr-si-08shrinkage,pr-si-09}.
The preliminary test estimator given in Section \ref{sec.prt} uses pooled estimator $\hat{\theta}_p‎$ when preliminary test accepts the null hypothesis. Instead of using $‎\hat{‎\theta‎}_p‎$, we can use a linear combination of $\hat{\theta}_p‎$ and $\hat{\theta}_1‎$ when the preliminary test accepts $H_0$, this gives a preliminary test shrinkage estimator which is smoother than the usual preliminary test estimator. The shrinkage estimator performs better than the usual estimator when the our guess is approximately   true. Shrinkage estimators are usually defined as estimators obtained through modification of the usual (maximum likelihood, minimum variance unbiased, least squares, etc.) estimator in order to optimize some desirable criterion function.

Among various kinds of shrinkage estimators proposed so far,
\cite{jani-91}
and
\cite{kourouklis-94}
have suggested shrinkage estimators for the scale parameter in one and two-parameter exponential distributions.
In this section, we study the preliminary test shrinkage estimator $\hat{\theta}_{S}$ following the same estimation procedure by
\cite{Inada-84}:
\begin{eqnarray}‎\label{eq1}‎
\hat{\theta}_{S}= \left\{
\begin{array}{lc}
K\hat{\theta}_p+(1-K)\hat{\theta}_1 \ \ \ &  c_1<\frac{\hat{\theta}_1}{\hat{\theta}_1}<c_2\\
\hat{\theta}_1 &  \text{otherwise}.
\end{array} \right.
\end{eqnarray}

If $K=1$, $‎‎\hat{‎\theta‎‎}_{S}$ reduce to $\hat{\theta}_{pt}$. The shrinkage coefficient $K$ is not defined explicitly as a function of the test statistic. The weighting function approach is intuitively appearing, but the mean square error of the resulting estimator usually cannot be derive unless the weighting function is in some simple form. Note that $‎\hat{‎‎\theta‎‎}‎_{pt}$ approaches $‎\theta_0$ as $‎‎\alpha‎\rightarrow 0$ ‎and it approaches $‎‎\hat{‎\theta‎}‎‎‎_{Ml}$ as $‎‎\alpha‎\rightarrow 1$, however $‎\hat{‎‎\theta‎‎}‎_S$ approaches $‎\hat{‎\theta‎}‎_{pt}$ as $‎‎K\rightarrow 1$ and it approaches  $‎‎\hat{‎\theta‎}‎‎‎_{Ml}$ as $‎‎K‎\rightarrow 0$.‎
Unfortunately, different value of significance level $(‎\alpha)$ or‎ different value of shrinkage coefficient ($K$) induces a different estimator. the choice of these values depends on the decision criterion.


‎

\begin{lemma}

For the shrinkage estimator of $‎\theta‎_1$ in \eqref{eq1}‎,  we have

\noindent i)
\begin{eqnarray*}
Bias_{\theta_1}(\hat\theta_{S})=E(\hat\theta_{S})-\theta‎_1&=‎& - K \lambda \big( \theta_1 [I_{d_2}(n_1+1,n_2)-I_{d_1}(n_1+1,n_2)] \big)\\
&&+ K \lambda \big( \theta_2 [I_{d_2}(n_1,n_2+1)-I_{d_1}(n_1,n_2+1)] \big),,
\end{eqnarray*}

\noindent ii)
\begin{eqnarray*}
MSE(\hat{\theta}_S)
&=&\frac{n_1+1}{n_1}\theta_1^2+(K^2 \lambda^2-2 K \lambda)\theta_1^2\frac{(n_1+1)}{n_1} [I_{d_2}(n_1+2,n_2)-I_{d_1}(n_1+2,n_2)]\\
&&+K^2 \lambda^2 \theta_2^2\frac{(n_2+1)}{n_2} [I_{d_2}(n_1,n_2+2)-I_{d_1}(n_1,n_2+2)]\\
&&+2(K \lambda-K^2 \lambda^2)\theta_1 \theta_2 [I_{d_2}(n_1+1,n_2+1)-I_{d_1}(n_1+1,n_2+1)]-2\theta_1^2\\
&&-2 K \lambda \theta_1 \theta_2 [I_{d_2}(n_1,n_2+1)-I_{d_1}(n_1,n_2+1)]\\
&&+2 K \lambda \theta_1^2 [I_{d_2}(n_1+1,n_2)-I_{d_1}(n_1+1,n_2)]+\theta_1^2.
\end{eqnarray*}
\end{lemma}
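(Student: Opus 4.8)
The plan is to recognize that $\hat{\theta}_S$ has exactly the same structure as the preliminary test estimator $\hat{\theta}_{pt}$, with the single replacement $\lambda\mapsto K\lambda$, and thereby inherit the previous lemma's computation rather than repeat it. First I would rewrite the pooled estimator from \eqref{thetap} as $\hat{\theta}_p=\hat{\theta}_1+\lambda(\hat{\theta}_2-\hat{\theta}_1)$ with $\lambda=n_2/(n_1+n_2)$. On the acceptance region $A=\{c_1<\hat{\theta}_1/\hat{\theta}_2<c_2\}$ the shrinkage rule reads $K\hat{\theta}_p+(1-K)\hat{\theta}_1=\hat{\theta}_1+K\lambda(\hat{\theta}_2-\hat{\theta}_1)$, so with the indicator notation of the earlier proof,
$$
\hat{\theta}_S=\hat{\theta}_1-K\lambda\,\hat{\theta}_1 I_A+K\lambda\,\hat{\theta}_2 I_A .
$$
This is literally the representation of $\hat{\theta}_{pt}$ with $\lambda$ replaced by $K\lambda$, which is the crux of the argument.

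The second step is to propagate this substitution through the moments. Because the bias of $\hat{\theta}_{pt}$ is affine in the coefficient multiplying $I_A$, replacing $\lambda$ by $K\lambda$ turns the previous bias directly into the stated $Bias_{\theta_1}(\hat{\theta}_S)$. For part (ii) I would expand $\hat{\theta}_S^2=\hat{\theta}_1^2+2K\lambda(\hat{\theta}_1\hat{\theta}_2-\hat{\theta}_1^2)I_A+K^2\lambda^2(\hat{\theta}_2-\hat{\theta}_1)^2 I_A$, using $I_A^2=I_A$; the coefficients of $\hat{\theta}_1^2 I_A$, $\hat{\theta}_2^2 I_A$ and $\hat{\theta}_1\hat{\theta}_2 I_A$ are then $K^2\lambda^2-2K\lambda$, $K^2\lambda^2$ and $2(K\lambda-K^2\lambda^2)$, exactly the patterns $\lambda^2-2\lambda$, $\lambda^2$, $2(\lambda-\lambda^2)$ of the preliminary test lemma under $\lambda\mapsto K\lambda$. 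Feeding these into $MSE(\hat{\theta}_S)=E(\hat{\theta}_S^2)-2\theta_1 E(\hat{\theta}_S)+\theta_1^2$ yields the claimed formula, and setting $K=1$ recovers the $\hat{\theta}_{pt}$ result as a consistency check.

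Should a self-contained derivation be preferred, the remaining content is the evaluation of the five mixed moments $E(\hat{\theta}_1 I_A)$, $E(\hat{\theta}_2 I_A)$, $E(\hat{\theta}_1^2 I_A)$, $E(\hat{\theta}_2^2 I_A)$ and $E(\hat{\theta}_1\hat{\theta}_2 I_A)$. Writing $V_i=n_i\hat{\theta}_i/\theta_i\sim\mathrm{Gamma}(n_i,1)$ and introducing $W=V_1/(V_1+V_2)\sim\mathrm{Beta}(n_1,n_2)$, independent of $S=V_1+V_2$, the event $A$ reduces to an interval $(d_1,d_2)$ in $W$, and each moment factorizes as a moment of $S$ times an integral $\int_{d_1}^{d_2}w^{a-1}(1-w)^{b-1}\,dw=I_{d_2}(a,b)-I_{d_1}(a,b)$. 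The main obstacle on this route is purely organizational: every additional factor of $\hat{\theta}_i$ advances the corresponding Beta shape parameter by one and contributes a ratio of Beta normalizing constants that must be reconciled against the moments of $S$, so that the incomplete beta functions appear at the shifted arguments $(n_1+2,n_2)$, $(n_1,n_2+2)$ and $(n_1+1,n_2+1)$ while the limits $d_1,d_2$ stay fixed. The reduction in the first two steps avoids this bookkeeping entirely, so in practice this lemma is best treated as a corollary of the preliminary test lemma.
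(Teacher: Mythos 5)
Your proposal is correct and takes essentially the same route as the paper: the paper's own proof rests on the identical indicator representation (the earlier lemma's $\hat{\theta}_{pt}=\hat{\theta}_1-\lambda\hat{\theta}_1 I_A+\lambda\hat{\theta}_2 I_A$ with $\lambda$ replaced by $K\lambda$), states the resulting first and second moments as a ``straightforward evaluation,'' and finishes with the same identity $MSE(\hat{\theta}_S)=E(\hat{\theta}_S^2)-2\theta_1 E(\hat{\theta}_S)+\theta_1^2$. Your explicit remark that the whole lemma is the preliminary-test lemma under the substitution $\lambda\mapsto K\lambda$ --- legitimate because the acceptance region $A$, and hence $d_1,d_2$, does not depend on $K$ --- is precisely the content the paper leaves implicit.
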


\begin{proof}
The straightforward evaluation leads to
\begin{eqnarray*}
E(\hat{\theta}_S)&=&‎\theta‎_1 - K \lambda \big( \theta_1 [I_{d_2}(n_1+1,n_2)-I_{d_1}(n_1+1,n_2)] \big)\\
&&+ K \lambda \big( \theta_2 [I_{d_2}(n_1,n_2+1)-I_{d_1}(n_1,n_2+1)] \big),
\end{eqnarray*}
and
 \begin{eqnarray*}
E(\hat{\theta}_{S}^2)&=&‎\frac{n_1+1}{n_1} ‎\theta_1^2 ‎‎+(K^2 \lambda^2-2 K \lambda) \big( \theta_1^2\frac{(n_1+1)}{n_1} [I_{d_2}(n_1+2,n_2)-I_{d_1}(n_1+2,n_2)] \big)\\
&&+K^2 \lambda^2 \big( \theta_2^2\frac{(n_2+1)}{n_2} [I_{d_2}(n_1,n_2+2)-I_{d_1}(n_1,n_2+2)] \big)\\
&&+2(K \lambda- K^2 \lambda^2) \big( \theta_1 \theta_2 [I_{d_2}(n_1+1,n_2+1)-I_{d_1}(n_1+1,n_2+1)] \big).
\end{eqnarray*}
 Since
$$
MSE(\hat{\theta}_{S})=E(\hat{\theta}_{S}^2)-2\theta_1 E(\hat{\theta}_{S})+\theta_1^2,
$$
the proof is completed.
\end{proof}

\begin{remark}
Under the weighted square error loss function $L(d;‎\theta)‎=‎\frac{(d-‎\theta)^2‎}{‎\theta‎^2}$, we have
\begin{eqnarray*}
R_{\alpha}(‎\delta,K)&=&‎‎Risk(‎\hat{‎\theta‎}‎_S,\theta_1)\\
&=&\delta^2 \big[K^2 \lambda^2 \frac{(n_2+1)}{n_2} [I_{d_2}(n_1,n_2+2)-I_{d_1}(n_1,n_2+2)]\big]\\
&&+\delta \big[2(K \lambda-K^2 \lambda^2) [I_{d_2}(n_1+1,n_2+1)-I_{d_1}(n_1+1,n_2+1)]\\
&&-2 K\lambda [I_{d_2}(n_1,n_2+1)-I_{d_1}(n_1,n_2+1)]\big]\\
&&+\big[\frac{1}{n_1}+(K^2 \lambda^2-2 K \lambda) \frac{(n_1+1)}{n_1} [I_{d_2}(n_1+2,n_2)-I_{d_1}(n_1+2,n_2)]\\
&&+2 K \lambda [I_{d_2}(n_1+1,n_2)-I_{d_1}(n_1+1,n_2)]\big].
\end{eqnarray*}
\end{remark}

The regret function is defined as
$$Reg(‎\delta,K,‎\alpha)=R_{\alpha}(‎\delta,K)-\inf_K R_{\alpha}(‎\delta,K).$$
Since $R_{\alpha}(‎\delta,K)$ has quadratic form w.r.t $K$, so it has minimum in the interval [0,1]. Let $R_{\alpha}(‎\delta,K)=K^2 h_2(‎\delta‎)‎‎+ K h_1(‎\delta‎) + h_0(‎\delta‎)$ then ‎$\frac{‎\partial R‎}{‎‎\partial K‎‎}=0‎$ imply that $K_0=‎\frac{-h_1(‎\delta‎)}{2 h_2(‎\delta‎)}‎$‎‎ is an extremum point. Therefore
$$ \inf_K R_{\alpha}(‎\delta,K) =\left\{
\begin{array}{lc}
  \min\{R_{\alpha}(‎\delta,0), R_{\alpha}(‎\delta,1), R_{\alpha}(‎\delta,K_0)\}‎‎‎ \ \   & \text{if} \ \ K_0‎‎ \in (0,1)‎‎\\
 \min\{ R_{\alpha}(‎\delta,0), R_{\alpha}(‎\delta,1)\}‎‎‎  & \text{otherwise}.
\end{array}\right.$$‎
After rather extensive numerical investigation the values $K$ which attain the $\inf_K \sup_{‎\delta‎}Reg(‎\delta, $ $K, ‎\alpha‎‎)$ are obtained.
We plot the risk function for $n_1=5$, $n_2=6$, $‎\alpha=0.16‎$, for $K=0, 1, 0.21$ in Figure \ref{risk3}.
Let $‎\delta_1$ and $‎\delta_2$ be intersections of $R_{\alpha}(‎\delta,0)$ and $R_{\alpha}(‎\delta,1)$. For $‎\delta‎\leq‎\delta_2$‎‎‎, $Reg(‎\delta,K,‎\alpha‎)$‎ takes a maximum value at $‎\delta_L$‎.  For $‎\delta‎>‎\delta_2$‎‎‎,  $Reg(‎\delta,K,‎\alpha‎)$‎  takes a maximum value at $‎\delta_U$.  The reasonable estimator  is obtained when these two maximum values are equal. Thus the minimax regret criterion determines $K^*$ such that: $$Reg(‎\delta_L,K^*‎) =Reg(‎\delta_U,K^*‎).$$

To find the optimal value of $K$, two cases are considered for $‎\alpha$‎:

\noindent {\bf Case I}: Let $‎\alpha=0.16‎$, that is the AIC optimal level of significance
\citep[see][]{Inada-84},
which is independent of $n$. Table \ref{tab.2} presents the values of $K^*$ for some $n_1$ and $n_2$.‎

\noindent {\bf Case II}: ‎Let $‎\alpha=‎\alpha^*$ ‎equal the significance level of pre-test. Table \ref{tab.3} presents the values of $K^*$ in this case for some $n_1$ and $n_2$.

\begin{figure}
 \begin{center}
\includegraphics[width=10cm,height=7cm]{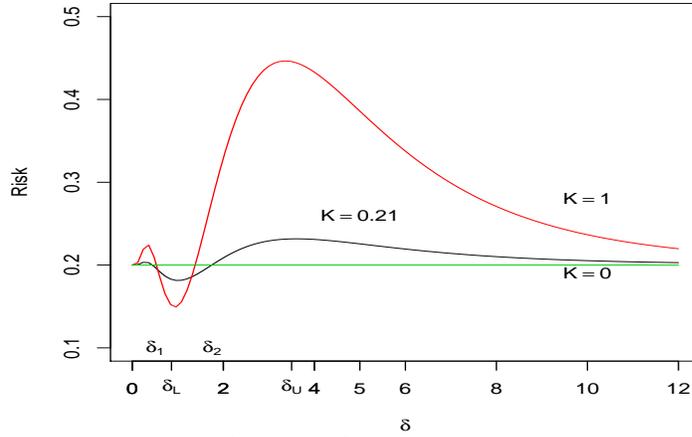}
\vspace{-1cm}
\caption{  The risk function for some $K‎$ with $n_1=5$ and $n_2=6$.}\label{risk3}
\end{center}
\end{figure}
‎

\begin{table}[htp]
\begin{center}
\caption{Optimal value $K$ for some $n_1$ and $n_2$ with $‎\alpha=0.16‎$.‎‎}\label{tab.2}
‎
\begin{tabular}{|c|cccccc|} \hline
 & \multicolumn{6}{|c|}{$n_1$} \\ \hline
  $n_2$ &  2 &  3 & 4 &  5 &  7 &  10 \\ \hline
2 & 0.17 & 0.23 & 0.29 & 0.32 & 0.38 & 0.42 \\
3 & 0.14 & 0.19 & 0.24 & 0.27 & 0.32 & 0.36 \\
4 & 0.12 & 0.17 & 0.21 & 0.24 & 0.29 & 0.33 \\
5 & 0.11 & 0.16 & 0.20 & 0.22 & 0.27 & 0.31 \\
7 & 0.10 & 0.14 & 0.18 & 0.20 & 0.24 & 0.28 \\
10 & 0.10 & 0.13 & 0.17 & 0.19 & 0.23 & 0.26 \\ \hline

\end{tabular}

\end{center}

\end{table}

\begin{table}[htp]
\begin{center}
\caption{Optimal value $K$ for some $n_1$ and $n_2$ with optimal $‎\alpha=‎\alpha^*‎‎$‎‎‎‎.}\label{tab.3}

\begin{tabular}{|c|cc|cc|cc|cc|cc|cc|} \hline
 & \multicolumn{2}{|c|}{$n_{1}=2$} & \multicolumn{2}{|c|}{$n_1=3$} & \multicolumn{2}{|c|}{$n_1=4$} & \multicolumn{2}{|c|}{$n_1=5$} & \multicolumn{2}{|c|}{$n_1=7$} & \multicolumn{2}{|c|}{$n_1=10$} \\ \hline
$n_2$ & ${\alpha }^*$ & $K^*$ & ${\alpha }^*$ & $K^*$ & ${\alpha }^*$ & $K^*$ & ${\alpha }^*$ & $K^*$ & ${\alpha }^*$ & $K^*$ & ${\alpha }^*$ & $K^*$ \\ \hline
2 & 0.38 & 0.21 & 0.30 & 0.29 & 0.27 & 0.34 & 0.24 & 0.37 & 0.22 & 0.42 & 0.20 & 0.45 \\
3 & 0.42 & 0.15 & 0.34 & 0.23 & 0.29 & 0.28 & 0.27 & 0.31 & 0.24 & 0.35 & 0.21 & 0.39 \\
4 & 0.44 & 0.12 & 0.36 & 0.19 & 0.31 & 0.24 & 0.28 & 0.27 & 0.25 & 0.32 & 0.22 & 0.35 \\
5 & 0.46 & 0.11 & 0.38 & 0.17 & 0.33 & 0.22 & 0.30 & 0.25 & 0.26 & 0.29 & 0.23 & 0.33 \\
7 & 0.49 & 0.10 & 0.40 & 0.15 & 0.35 & 0.19 & 0.32 & 0.22 & 0.28 & 0.26 & 0.25 & 0.30 \\
10 & 0.51 & 0.09 & 0.42 & 0.14 & 0.37 & 0.17 & 0.33 & 0.20 & 0.29 & 0.24 & 0.26 & 0.28 \\ \hline
\end{tabular}

\end{center}
\end{table}

\begin{remark}
For the general case, if we observe upper record values $X_{U(1)}, \dots, X_{U(n_1)}$ and
 $Y_{U(1)}, \dots, Y_{U(n_2)}$ from location-scale exponential distribution
with the following probability density function
\begin{eqnarray*}
 f_X(x)=\frac{1}{\theta_1} \exp ( - \frac{x-\eta_1}{\theta_1} ) \ \ \ \ x > \eta_1,    \ \ \ \
 f_Y(y)=\frac{1}{\theta_2} \exp ( - \frac{y-\eta_2}{\theta_2} ) \ \ \ \ y > \eta_2,
 \end{eqnarray*}
the MLE's of ‎$\theta_1$‎ and $\theta_2$ are
‎\begin{eqnarray*}
\hat{‎\theta‎}_1=‎\frac{1}{n_1}(‎X_{U(n_1)}-X_{U(1)}), \ \  \hat{‎\theta‎}_2‎=‎\frac{1}{n_2}(‎Y_{U(n_2)}-‎Y_{U(1)}),
\end{eqnarray*}
 and ${2‎n_i ‎\hat{‎\theta‎}_i‎}/{‎\theta_i‎}‎$ has the chi-square distribution with $2n_i-2$, $i=1,2$ degrees of freedom.
In this case, we have the same result as the special case
in absence of location parameter with a new statistic and degree of freedom.
\end{remark}

\section{Numerical study}
\label{sec.num}
In this section, we first illustrate the proposed estimators using an example. Then, these estimators are compared using simulation.

\subsection{An example}
The following the simulated record values are given by \cite{baklizi-13}. The first record values which are generated from location-scale exponential distribution with $\theta_1=1$ and $\eta_1=3$ are
3.105, 6.158, 6.296, 6.824, 7.282, 10.200, 10.240, and 11.669,
and
first record values which are generated from location-scale exponential distribution with $\theta_2=1$ and $\eta_2=1$ are 1.177, 2.430, 4.090, 4.349, 4.624, 5.655, 6.021, and 6.987.

The MLE's of $\theta_1$ and $\theta_2$ are $\hat\theta_1=1.0705$ and $\hat\theta_2=0.7262$. Since the hypothesis $H_0: \theta_1=\theta_2$ is not rejected the preliminary  test  and optimal shrinkage estimations are  $\hat\theta_{pt}=0.8984$, and $\hat\theta_{S}=1.0292$, respectively. It can be seen that the optimal shrinkage estimation is very close to the true value $\theta_1=1$.

\subsection{Simulation}
We performed a simulation to compare the ML, preliminary test, and shrinkage estimators with 10000  repetition. First, a random sample with size $n_1$ is generated from an exponential distribution with scale parameter $\theta_1=1$. Then, a random sample with size $n_2$ is generated from an exponential distribution with scale parameter $\theta_2$. We consider some values between 0.1 and 3.0 for $\theta_2$. The estimators are calculated and  their biases are obtained. Also, the efficiency  of   preliminary test, and shrinkage estimators with respect to MLE are computed (Here, the efficiency is ratio of MSEs). The results for $\alpha=0.16$ are  given in Tables \ref{tab.sim1},
and for the optimal $\alpha$ and $K$ (proposed in Table \ref{tab.3})  are given in Table \ref{tab.sim2}.

It can be concluded that\\
i.  When $\theta_1$ and $\theta_2$ are very close, the preliminary test estimator is better than the shrinkage estimator. Also,  the shrinkage estimator is better than the MLE.  Therefore, the two proposed estimators are better that MLE when a prior information about the scale parameters is  $\theta_1=\theta_2$.   \\
ii. When $\theta_1$ and $\theta_2$ are not close, the MLE are better than the shrinkage estimator. Also, the shrinkage estimator is better than  the preliminary test estimator.

\begin{table}
\begin{center}
\caption{The simulation results with $\alpha=0.16$.} \label{tab.sim1}
\begin{tabular}{|c|c|ccc|cc||c|c|ccc|cc|} \hline
 &  &  \multicolumn{3}{|c|}{Bias} & \multicolumn{2}{|c||}{Efficiency} &  &    & \multicolumn{3}{|c|}{Bias} & \multicolumn{2}{|c|}{Efficiency} \\ \hline

$n_1,n_2$ & $\theta_2$ & $\hat\theta_1$ & $\hat\theta_{pt}$ & $\hat\theta_s$ & $\hat\theta_{pt}$ & $\hat\theta_s$ &
$n_1,n_2$ & $\theta_2$ & $\hat\theta_1$ & $\hat\theta_{pt}$ & $\hat\theta_s$ & $\hat\theta_{pt}$ & $\hat\theta_s$ \\ \hline

2,2 & 0.1 & -0.003 & -0.003 & -0.003 & 1.000 & 1.000 & 7,2 & 0.1 & 0.002 & 0.000 & 0.000 & 0.999 & 1.000 \\
 & 0.3 & -0.003 & -0.022 & -0.008 & 0.902 & 0.978 &  & 0.3 & 0.002 & -0.018 & -0.005 & 0.905 & 0.978 \\
 & 0.5 & -0.003 & -0.056 & -0.017 & 0.914 & 0.995 &  & 0.5 & 0.002 & -0.054 & -0.014 & 0.913 & 0.994 \\
 & 0.8 & -0.003 & -0.042 & -0.013 & 1.200 & 1.080 &  & 0.8 & 0.002 & -0.036 & -0.009 & 1.201 & 1.079 \\
 & 1.0 & -0.003 & -0.006 & -0.004 & 1.240 & 1.095 &  & 1.0 & 0.002 & 0.000 & 0.000 & 1.226 & 1.091 \\
 & 1.2 & -0.003 & 0.028 & 0.007 & 1.099 & 1.069 &  & 1.2 & 0.002 & 0.032 & 0.010 & 1.104 & 1.070 \\
 & 1.5 & -0.003 & 0.064 & 0.014 & 0.849 & 1.005 &  & 1.5 & 0.002 & 0.067 & 0.017 & 0.878 & 1.016 \\
 & 2.0 & -0.003 & 0.081 & 0.019 & 0.661 & 0.930 &  & 2.0 & 0.002 & 0.080 & 0.021 & 0.682 & 0.938 \\
 & 2.5 & -0.003 & 0.068 & 0.016 & 0.635 & 0.911 &  & 2.5 & 0.002 & 0.071 & 0.018 & 0.643 & 0.913 \\
 & 3.0 & -0.003 & 0.050 & 0.011 & 0.667 & 0.917 &  & 3.0 & 0.002 & 0.054 & 0.014 & 0.662 & 0.915 \\ \hline

2,5 & 0.1 & -0.001 & 0.002 & 0.002 & 1.000 & 1.000 & 7,5 & 0.1 & 0.001 & 0.003 & 0.003 & 0.999 & 1.000 \\
 & 0.3 & -0.001 & -0.016 & -0.002 & 0.903 & 0.977 &  & 0.3 & 0.001 & -0.017 & -0.002 & 0.901 & 0.977 \\
 & 0.5 & -0.001 & -0.050 & -0.011 & 0.925 & 0.997 &  & 0.5 & 0.001 & -0.051 & -0.011 & 0.922 & 0.997 \\
 & 0.8 & -0.001 & -0.036 & -0.008 & 1.201 & 1.078 &  & 0.8 & 0.001 & -0.036 & -0.007 & 1.207 & 1.080 \\
 & 1.0 & -0.001 & 0.000 & 0.002 & 1.244 & 1.094 &  & 1.0 & 0.001 & 0.001 & 0.002 & 1.230 & 1.091 \\
 & 1.2 & -0.001 & 0.032 & 0.010 & 1.112 & 1.075 &  & 1.2 & 0.001 & 0.035 & 0.012 & 1.093 & 1.066 \\
 & 1.5 & -0.001 & 0.069 & 0.020 & 0.857 & 1.007 &  & 1.5 & 0.001 & 0.068 & 0.020 & 0.873 & 1.013 \\
 & 2.0 & -0.001 & 0.084 & 0.024 & 0.684 & 0.938 &  & 2.0 & 0.001 & 0.087 & 0.025 & 0.671 & 0.934 \\
 & 2.5 & -0.001 & 0.076 & 0.021 & 0.636 & 0.910 &  & 2.5 & 0.001 & 0.073 & 0.021 & 0.646 & 0.915 \\
 & 3.0 & -0.001 & 0.055 & 0.016 & 0.673 & 0.919 &  & 3.0 & 0.001 & 0.055 & 0.017 & 0.664 & 0.915 \\ \hline

2,10 & 0.1 & -0.001 & -0.001 & -0.001 & 0.999 & 1.000 & 7,7 & 0.1 & -0.005 & 0.001 & 0.001 & 1.000 & 1.000 \\
 & 0.3 & -0.001 & -0.019 & -0.005 & 0.903 & 0.978 &  & 0.3 & -0.005 & -0.018 & -0.004 & 0.908 & 0.979 \\
 & 0.5 & -0.001 & -0.055 & -0.015 & 0.922 & 0.997 &  & 0.5 & -0.005 & -0.052 & -0.013 & 0.914 & 0.994 \\
 & 0.8 & -0.001 & -0.038 & -0.010 & 1.191 & 1.077 &  & 0.8 & -0.005 & -0.038 & -0.009 & 1.198 & 1.079 \\
 & 1.0 & -0.001 & -0.003 & -0.001 & 1.216 & 1.088 &  & 1.0 & -0.005 & -0.002 & 0.000 & 1.219 & 1.089 \\
 & 1.2 & -0.001 & 0.031 & 0.007 & 1.075 & 1.063 &  & 1.2 & -0.005 & 0.030 & 0.006 & 1.094 & 1.068 \\
 & 1.5 & -0.001 & 0.064 & 0.016 & 0.880 & 1.014 &  & 1.5 & -0.005 & 0.067 & 0.018 & 0.861 & 1.008 \\
 & 2.0 & -0.001 & 0.080 & 0.020 & 0.690 & 0.940 &  & 2.0 & -0.005 & 0.081 & 0.022 & 0.683 & 0.938 \\
 & 2.5 & -0.001 & 0.072 & 0.018 & 0.638 & 0.912 &  & 2.5 & -0.005 & 0.071 & 0.019 & 0.645 & 0.914 \\
 & 3.0 & -0.001 & 0.055 & 0.014 & 0.654 & 0.912 &  & 3.0 & -0.005 & 0.056 & 0.015 & 0.656 & 0.913 \\ \hline

10,2 & 0.1 & -0.004 & -0.002 & -0.002 & 1.000 & 1.000 & 10,7 & 0.1 & 0.002 & 0.004 & 0.004 & 0.999 & 1.000 \\
 & 0.3 & -0.004 & -0.020 & -0.007 & 0.903 & 0.978 &  & 0.3 & 0.002 & -0.015 & -0.001 & 0.903 & 0.978 \\
 & 0.5 & -0.004 & -0.054 & -0.015 & 0.916 & 0.995 &  & 0.5 & 0.002 & -0.049 & -0.010 & 0.920 & 0.996 \\
 & 0.8 & -0.004 & -0.036 & -0.011 & 1.206 & 1.082 &  & 0.8 & 0.002 & -0.035 & -0.006 & 1.207 & 1.081 \\
 & 1.0 & -0.004 & -0.004 & -0.003 & 1.221 & 1.090 &  & 1.0 & 0.002 & 0.000 & 0.003 & 1.246 & 1.094 \\
 & 1.2 & -0.004 & 0.031 & 0.008 & 1.100 & 1.071 &  & 1.2 & 0.002 & 0.034 & 0.010 & 1.092 & 1.067 \\
 & 1.5 & -0.004 & 0.064 & 0.015 & 0.861 & 1.011 &  & 1.5 & 0.002 & 0.069 & 0.021 & 0.862 & 1.009 \\
 & 2.0 & -0.004 & 0.080 & 0.020 & 0.674 & 0.935 &  & 2.0 & 0.002 & 0.087 & 0.025 & 0.680 & 0.938 \\
 & 2.5 & -0.004 & 0.069 & 0.017 & 0.632 & 0.909 &  & 2.5 & 0.002 & 0.077 & 0.023 & 0.629 & 0.907 \\
 & 3.0 & -0.004 & 0.053 & 0.013 & 0.653 & 0.911 &  & 3.0 & 0.002 & 0.060 & 0.018 & 0.664 & 0.916 \\ \hline

\end{tabular}
\end{center}
\end{table}

\begin{table}
\begin{center}
\caption{The simulation results with optimal $\alpha$ and $K$.} \label{tab.sim2}
\begin{tabular}{|c|c|ccc|cc||c|c|ccc|cc|} \hline
 &  &  \multicolumn{3}{|c|}{Bias} & \multicolumn{2}{|c||}{Efficiency} &  &    & \multicolumn{3}{|c|}{Bias} & \multicolumn{2}{|c|}{Efficiency} \\ \hline

$n_1,n_2$ & $\theta_2$ & $\hat\theta_1$ & $\hat\theta_{pt}$ & $\hat\theta_s$ & $\hat\theta_{pt}$ & $\hat\theta_s$ &
$n_1,n_2$ & $\theta_2$ & $\hat\theta_1$ & $\hat\theta_{pt}$ & $\hat\theta_s$ & $\hat\theta_{pt}$ & $\hat\theta_s$ \\ \hline

2,2 & 0.1 & -0.003 & -0.003 & -0.003 & 1.000 & 1.000 & 7,2 & 0.1 & 0.002 & 0.002 & 0.002 & 1.000 & 1.000 \\
 & 0.3 & -0.003 & -0.007 & -0.004 & 0.975 & 0.993 &  & 0.3 & 0.002 & -0.009 & -0.001 & 0.936 & 0.983 \\
 & 0.5 & -0.003 & -0.019 & -0.008 & 0.965 & 0.994 &  & 0.5 & 0.002 & -0.035 & -0.009 & 0.938 & 0.993 \\
 & 0.8 & -0.003 & -0.017 & -0.007 & 1.058 & 1.028 &  & 0.8 & 0.002 & -0.026 & -0.006 & 1.128 & 1.062 \\
 & 1.0 & -0.003 & -0.006 & -0.004 & 1.066 & 1.033 &  & 1.0 & 0.002 & 0.000 & 0.002 & 1.160 & 1.076 \\
 & 1.2 & -0.003 & 0.013 & 0.007 & 1.027 & 1.023 &  & 1.2 & 0.002 & 0.020 & 0.004 & 1.063 & 1.054 \\
 & 1.5 & -0.003 & 0.015 & 0.002 & 0.961 & 1.002 &  & 1.5 & 0.002 & 0.049 & 0.016 & 0.904 & 1.006 \\
 & 2.0 & -0.003 & 0.017 & 0.003 & 0.888 & 0.975 &  & 2.0 & 0.002 & 0.058 & 0.019 & 0.754 & 0.943 \\
 & 2.5 & -0.003 & 0.013 & 0.002 & 0.885 & 0.970 &  & 2.5 & 0.002 & 0.049 & 0.016 & 0.725 & 0.924 \\
 & 3.0 & -0.003 & 0.007 & 0.000 & 0.910 & 0.977 &  & 3.0 & 0.002 & 0.038 & 0.013 & 0.750 & 0.930 \\ \hline

2,5 & 0.1 & -0.001 & -0.001 & -0.001 & 1.000 & 1.000 & 7,5 & 0.1 & 0.001 & 0.001 & 0.001 & 1.000 & 1.000 \\
 & 0.3 & -0.001 & -0.002 & -0.001 & 0.987 & 0.996 &  & 0.3 & 0.001 & -0.007 & -0.002 & 0.951 & 0.987 \\
 & 0.5 & -0.001 & -0.010 & -0.004 & 0.975 & 0.995 &  & 0.5 & 0.001 & -0.029 & -0.008 & 0.939 & 0.991 \\
 & 0.8 & -0.001 & -0.011 & -0.004 & 1.036 & 1.018 &  & 0.8 & 0.001 & -0.023 & -0.006 & 1.107 & 1.052 \\
 & 1.0 & -0.001 & -0.004 & -0.002 & 1.051 & 1.024 &  & 1.0 & 0.001 & -0.002 & 0.000 & 1.148 & 1.068 \\
 & 1.2 & -0.001 & 0.003 & 0.001 & 1.031 & 1.018 &  & 1.2 & 0.001 & 0.022 & 0.010 & 1.060 & 1.046 \\
 & 1.5 & -0.001 & 0.011 & 0.003 & 0.979 & 1.003 &  & 1.5 & 0.001 & 0.038 & 0.012 & 0.918 & 1.003 \\
 & 2.0 & -0.001 & 0.012 & 0.003 & 0.930 & 0.984 &  & 2.0 & 0.001 & 0.046 & 0.014 & 0.786 & 0.950 \\
 & 2.5 & -0.001 & 0.009 & 0.002 & 0.929 & 0.982 &  & 2.5 & 0.001 & 0.037 & 0.012 & 0.780 & 0.942 \\
 & 3.0 & -0.001 & 0.006 & 0.001 & 0.944 & 0.985 &  & 3.0 & 0.001 & 0.027 & 0.009 & 0.799 & 0.945 \\ \hline

2,10 & 0.1 & -0.001 & -0.001 & -0.001 & 1.000 & 1.000 & 7,7 & 0.1 & -0.005 & -0.005 & -0.005 & 1.000 & 1.000 \\
 & 0.3 & -0.001 & -0.002 & -0.001 & 0.990 & 0.997 &  & 0.3 & -0.005 & -0.011 & -0.007 & 0.956 & 0.988 \\
 & 0.5 & -0.001 & -0.008 & -0.003 & 0.980 & 0.996 &  & 0.5 & -0.005 & -0.031 & -0.013 & 0.941 & 0.990 \\
 & 0.8 & -0.001 & -0.008 & -0.003 & 1.020 & 1.011 &  & 0.8 & -0.005 & -0.027 & -0.011 & 1.104 & 1.050 \\
 & 1.0 & -0.001 & -0.003 & -0.001 & 1.028 & 1.015 &  & 1.0 & -0.005 & -0.007 & -0.005 & 1.126 & 1.060 \\
 & 1.2 & -0.003 & -0.003 & -0.002 & 1.019 & 1.012 &  & 1.2 & -0.005 & 0.013 & 0.001 & 1.052 & 1.042 \\
 & 1.5 & -0.001 & 0.007 & 0.002 & 0.982 & 1.001 &  & 1.5 & -0.005 & 0.031 & 0.006 & 0.912 & 0.999 \\
 & 2.0 & -0.001 & 0.008 & 0.002 & 0.951 & 0.989 &  & 2.0 & -0.005 & 0.034 & 0.007 & 0.806 & 0.955 \\
 & 2.5 & -0.001 & 0.005 & 0.001 & 0.953 & 0.988 &  & 2.5 & -0.005 & 0.027 & 0.005 & 0.793 & 0.945 \\
 & 3.0 & -0.001 & 0.004 & 0.001 & 0.954 & 0.988 &  & 3.0 & -0.005 & 0.018 & 0.002 & 0.826 & 0.953 \\ \hline

10,2 & 0.1 & -0.004 & -0.004 & -0.004 & 1.000 & 1.000 & 10,7 & 0.1 & 0.002 & 0.002 & 0.002 & 1.000 & 1.000 \\
 & 0.3 & -0.004 & -0.018 & -0.008 & 0.926 & 0.980 &  & 0.3 & 0.002 & -0.006 & -0.001 & 0.950 & 0.986 \\
 & 0.5 & -0.004 & -0.046 & -0.017 & 0.923 & 0.990 &  & 0.5 & 0.002 & -0.029 & -0.008 & 0.939 & 0.991 \\
 & 0.8 & -0.004 & -0.037 & -0.014 & 1.156 & 1.072 &  & 0.8 & 0.002 & -0.024 & -0.006 & 1.122 & 1.056 \\
 & 1.0 & -0.004 & -0.008 & -0.006 & 1.193 & 1.089 &  & 1.0 & 0.002 & -0.003 & 0.000 & 1.144 & 1.068 \\
 & 1.2 & -0.004 & 0.028 & 0.010 & 1.089 & 1.066 &  & 1.2 & 0.002 & 0.014 & 0.002 & 1.072 & 1.051 \\
 & 1.5 & -0.004 & 0.048 & 0.011 & 0.893 & 1.008 &  & 1.5 & 0.002 & 0.043 & 0.014 & 0.904 & 1.001 \\
 & 2.0 & -0.004 & 0.058 & 0.014 & 0.738 & 0.941 &  & 2.0 & 0.002 & 0.049 & 0.016 & 0.785 & 0.951 \\
 & 2.5 & -0.004 & 0.048 & 0.011 & 0.709 & 0.921 &  & 2.5 & 0.002 & 0.040 & 0.013 & 0.766 & 0.937 \\
 & 3.0 & -0.004 & 0.035 & 0.007 & 0.729 & 0.923 &  & 3.0 & 0.002 & 0.028 & 0.010 & 0.802 & 0.946 \\ \hline

\end{tabular}

\end{center}
\end{table}

\section*{Acknowledgements}
The authors would like to thank the anonymous  referees  for many helpful comments and  suggestions.

\newpage

\bibliographystyle{apa}

\end{document}